% ------------------------------------------------------------------------
% bjourdoc.tex for birkjour.cls*******************************************
% ------------------------------------------------------------------------
%%%%%%%%%%%%%%%%%%%%%%%%%%%%%%%%%%%%%%%%%%%%%%%%%%%%%%%%%%%%%%%%%%%%%%%%%%

\documentclass{birkjour}
\usepackage{amsmath}
%
%
% THEOREM Environments (Examples)-----------------------------------------
%
 \newtheorem{thm}{Theorem}[section]
 
 \newtheorem{lem}[thm]{Lemma}
 
 \theoremstyle{definition}
 \newtheorem{defn}[thm]{Definition}
 \theoremstyle{remark}
 \newtheorem{rem}[thm]{Remark}
 
 \numberwithin{equation}{section}

\begin{document}

%-------------------------------------------------------------------------
% editorial commands: to be inserted by the editorial office
%
%\firstpage{1} \volume{228} \Copyrightyear{2004} \DOI{003-0001}
%
%
%\seriesextra{Just an add-on}
%\seriesextraline{This is the Concrete Title of this Book\br H.E. R and S.T.C. W, Eds.}
%
% for journals:
%
%\firstpage{1}
%\issuenumber{1}
%\Volumeandyear{1 (2004)}
%\Copyrightyear{2004}
%\DOI{003-xxxx-y}
%\Signet
%\commby{inhouse}
%\submitted{March 14, 2003}
%\received{March 16, 2000}
%\revised{June 1, 2000}
%\accepted{July 22, 2000}
%
%
%
%---------------------------------------------------------------------------
%Insert here the title, affiliations and abstract:
%

\title[Finiteness of CCs for Homogeneous Potential]{Finiteness of non-degenerate central configurations of the planar $n$-body problem with a homogeneous potential}

%----------Author 1
\author{Julius NATRUP}
\address{
Institute of Mathematics\\
University of Augsburg\\
Universitätsstraße 2\\
86159 Augsburg\\
Germany
}
\email{julius.natrup@math.uni-augsburg.de}
\thanks{This work is supported by the DFG grant ZH 605/1-1 and the NSFC Young Scientists Fund 11901160.}
%----------Author 2

\author{Qun WANG}

\address{%
Department of Mathematical and Computational Sciences\\
University of Toronto Mississauga\\
3359 Mississauga Road\\
Mississauga, ON, L5L 1C6\\
Canada}
\email{qun.wang@utoronto.ca}

%----------Author 3
\author{Yuchen WANG}
\address{School of Mathematical Science\br
Tianjin Normal University\br
Tianjin, 300387\br
China}
\email{ycwang@tjnu.edu.cn}
%----------classification, keywords, date
\subjclass{70F10; 70G55; 37N05; 37N10}

\keywords{N-body problem, Central configurations, Homogeneous potentials}

\date{January 1, 2004}
%----------additions

%%% ----------------------------------------------------------------------

\begin{abstract}
We show that there exist an upper bound and a lower bound for the number of non-degenerate central configurations of the n-body problem in the plane with a homogeneous potential. In particular, both bounds are independent of the homogeneous degree of the potential under consideration.   
\end{abstract}

%%% ----------------------------------------------------------------------
\maketitle
%%% ----------------------------------------------------------------------
%\tableofcontents
\section{Introduction}
The movements of $n$ point masses in the plane under the Newtonian gravity are governed by the equations
\begin{align}
    m_i\ddot{q}_{i}(t) =\sum_{\substack{ 1\leq j\leq n\\j\neq i}}m_i m_j\frac{ q_{ij}(t) }{\|q_{ij}(t)\|^3} =\nabla_{i}U(q_1,q_2,...,q_n),  \quad 1\leq i\leq n,
\end{align}
where $m_i\in\mathbb{R}_{+}$ and $q_{i}= (x_i,y_i)\in \mathbb{R}^2$ are the mass and the position in the plane of the $i^{th}$ body respectively, and $q_{ij}=q_j-q_i$. Here $\|q_{ij}\|$ stands for the Euclidean distance between the $i^{th}$ body and the $j^{th}$ body. The function  
\begin{align}
U:\mathbb{R}^{2n}\setminus \Delta \rightarrow \mathbb{R},\quad    U(q_1,q_2,...,q_n) = \sum_{1\leq i<j\leq n}\frac{m_im_j}{\|q_{ij}\|}, 
\end{align}
is the Newtonian force function, where $\Delta$ is the collision set, defined by 
\[\Delta = \{(q_1,q_2,...,q_n)\in\mathbb{R}^{2n}, \exists 1\leq i<j\leq n, q_i=q_j\}.\]
\begin{defn}
 A \textit{central configuration} $\mathbf{q}=(q_1,q_2,...,q_n)$  is a configuration satisfying 
\begin{align}
\label{eq:cc}
    -\nabla_i U(q_1,q_2,\dots,q_n) = \lambda m_i(q_i-c), \quad 1\leq i\leq n. 
\end{align}   
\end{defn}
\sloppy {In the above system, $\lambda\in\mathbb{R}$ is uniquely determined by $\lambda = U / I$, where $I =\sum_{i=1}^{n}m_i \|q_i-c\|^2$ is the inertia, and $c=\sum_{i=1}^{n}m_iq_i / \sum_{i=1}^{n}m_i$ is the center of mass.}

Note that the system \eqref{eq:cc} determining the central configurations is invariant under the (diagonal) action of the Euclidean group $E(2)$. As a result, any central configuration is not isolated, and hence belongs to an equivalence class of central configurations. We may fix $c$ to be $0$ using the translation invariance, and normalise $\lambda$ to be $1$ using the dilation invariance. The equation \eqref{eq:cc} becomes    
\begin{align}
\label{eq:cc1}
     q_{i} =\sum_{\substack{ 1\leq j\leq n\\j\neq i}}m_i \frac{q_{ij} }{\|q_{ij}\|^3},  \quad 1\leq i\leq n.
\end{align}

The central configurations permit explicit construction of self-similar solutions, serve as the limit of collision configuration after blow-up, and produce new solutions through perturbation. They play a significant role in understanding the dynamics of the $n$-body problems. However basic questions concerning the central configurations, among which their finiteness,  is far from completely known yet. 

The finiteness of central configurations of the $3$-body problem consists of only three Eulerian (collinear) and two Lagrangian (equilateral triangle) configurations. The finiteness of the 4-body problem in the plane is much more challenging. In \cite{moeckel2001}, Moeckel proved that there are only finitely many central configurations in the planar four-body problem for almost all choices of masses. The finiteness for all the positive masses is proved by Hampton and Mockel \cite{hampton2006finiteness}. Albouy and Kaloshin \cite{albouy2012finiteness} provided another proof for the finiteness of the 4-body problem. In the same paper, they proved that the finiteness of the  5-body problem is generically true, except perhaps when the masses belong to some co-dimensional 2 algebraic varieties. Recently Chang and Chen \cite{chang2024toward} have formulated an algorithm based on the method of \cite{albouy2012finiteness}, towards the finiteness of the 6-body problem. We refer to \cite{albouy2012finiteness, moeckel2014lectures} and the references therein for a more detailed literature review and historical remarks on this problem. 

Our aim in this paper is to investigate the finiteness of central configurations for a potential $-U_\alpha: \mathbb{R}^{2n}\setminus \Delta \rightarrow \mathbb{R}$ which is homogeneous of degree $-\alpha$, i.e., 
\begin{align}
    -U_{\alpha}(q_1,q_2,...,q_n) = 
\begin{cases}
\displaystyle -\sum_{1\leq i<j\leq n}\frac{m_im_j}{\|q_{ij}\|^{\alpha}},\quad &\alpha>0,\\
\displaystyle -\sum_{1\leq i<j\leq n}m_im_j\log \|q_{ij}\|, \quad &\alpha =0.
\end{cases} 
\end{align}
With such homogeneous potentials, the equations of central configurations become 
\begin{align}
\label{eq:cc_homogeneous}
 \forall 1\leq i\leq n,\quad     q_{i} =\sum_{\substack{ 1\leq j\leq n\\j\neq i}}m_i\frac{ q_{ij} }{\|q_{ij}\|^{\alpha+2}}.    
\end{align}
In particular, when $\alpha=1$, $-U_{\alpha}$ becomes the classical Newtonian potential, and when $\alpha=0$, solutions of the central configuration equations are the relative equilibria of the $n$-vortex problem, which is considered as a finite-dimensional approximation of the 2-dimensional incompressible Euler equations in hydrodynamics. 

Note that under the normalisation $\lambda =1$, these configurations $\mathbf{q} =(q_1,q_2,\dots, q_n)$ are critical points of $U_{\alpha}$ restricted to the shape sphere $I(\mathbf{q})=1$. In other words, they are the critical points of the function $U_{\alpha}(\mathbf{q})+\frac{1}{2}I(\mathbf{q})$. We say that a central configuration is \textit{non-degenerate}, if it is a non-degenerate critical point of the above function. It is also worth mentioning that non-degenerate central configurations are also essential ingredients in the construction of highly concentrated solutions of the singular-perturbed elliptic PDEs, for example \cite{CaoPengYan2015,AoWei2021}.

When $\alpha=1$, the central configuration equations form a system of algebraic equations after complexification. As a result, methods from (complex) algebraic geometry can be applied to produce useful consequences, as in \cite{hampton2006finiteness} and \cite{albouy2012finiteness}. When $\alpha$ is a non-negative integer, the algebraic nature of the central configuration equations essentially do not change, and similar analysis applies to such homogeneous potentials, see \cite{hampton2009finiteness} and \cite{yu2023finiteness} and references therein. 

For general $\alpha \geq 0$, however, the central configuration equations will no longer be algebraic equations, and the results seem to be relatively rare. Despite the technical difficulty, such potentials arise naturally in popular models in physics, for both theoretical and applied interests. For example, when $\alpha>2$, the potential corresponds to the $n$-body problem in celestial mechanics with a strong force, where the variational principle can be directly applied to find collision-free periodic orbits. On the other hand, when $\alpha\in (0,2)$, the potential corresponds to the $n$-vortex problem from the generalized surface quasi-geostrophic (gSQG) equation, which models the frontogenesis in meteorology. The study of central configurations for such potentials might provide useful information in understanding these systems. 
\section{Main Results}
Our main result is the following theorem: 
\begin{thm} \label{T:main}
For any $ \alpha\geq 0$ and for any masses $m_1,m_2,...,m_n \in \mathbb{R}_{*}$ satisfying $\sum_{i=1}^n m_i \neq 0$, the number of non-degenerate central configurations with the homogeneous potential $-U_{\alpha}$ is bounded both from above and from below. Moreover, both bounds depend only on the number of bodies $n$, and are independent of the homogeneous degree $\alpha$ and the masses. 
\end{thm}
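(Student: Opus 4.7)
The plan is to reformulate the problem in terms of critical points on a fixed compact manifold. Namely, non-degenerate central configurations are interpreted as non-degenerate critical points of the restricted function $F_\alpha = U_\alpha + \tfrac{1}{2}I$ on the reduced shape space
\begin{equation*}
\mathcal{M} \;=\; \bigl(\{\mathbf{q}\in\mathbb{R}^{2n}:\, I(\mathbf{q})=1,\; c(\mathbf{q})=0\}\setminus\Delta\bigr)\big/SO(2),
\end{equation*}
which is a smooth manifold whose topology (dimension, Betti numbers, Euler characteristic) depends only on $n$.

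For the lower bound I would first verify that, for $\alpha>0$, the restriction $F_\alpha|_{\mathcal{M}}$ is proper and tends to $+\infty$ at the collision boundary (since $U_\alpha$ is a sum of positive terms each blowing up like $r_{ij}^{-\alpha}$), while for $\alpha=0$ the same holds for $-F_0$ because $-U_0$ blows up like $-\log r_{ij}\to+\infty$. Standard weak Morse inequalities then give, under the non-degeneracy hypothesis,
\begin{equation*}
\#\{\text{non-degenerate CCs}\}\;\ge\;\sum_{k\ge 0} b_k(\mathcal{M}),
\end{equation*}
producing a lower bound depending only on $n$.

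For the upper bound the plan is to apply the theory of Pfaffian functions in the sense of Khovanskii (or, equivalently, to work inside the o-minimal structure $\mathbb{R}_{\mathrm{an},\exp}$, which contains all power functions via $x^{\alpha}=\exp(\alpha\log x)$). After introducing auxiliary variables $r_{ij}=\|q_{ij}\|$ and writing $r_{ij}^{\alpha+2}=\exp\bigl((\alpha+2)\log r_{ij}\bigr)$, the central-configuration system \eqref{eq:cc_homogeneous} becomes a system of equations whose Pfaffian complexity (number of variables, polynomial degrees, length of the Pfaffian chain) depends only on $n$: one logarithm and one exponential per pair $\{i,j\}$, coupled polynomially. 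Khovanskii's theorem then bounds the total number of non-degenerate real solutions by a quantity depending only on this complexity, i.e.\ only on $n$, yielding an upper bound independent of both $\alpha$ and the masses.

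The main obstacle is the rigorous uniformity of the Khovanskii-type bound across the full range $\alpha\in[0,\infty)$. The Pfaffian encoding above treats $\alpha$ as a \emph{parameter} multiplying a fixed logarithm; one has to argue that the resulting family of Pfaffian systems satisfies a bound uniform in the parameter, which is standard in the o-minimal setting but requires care when passing to the transitional values $\alpha=0$ (where the potential degenerates to logarithmic type) and $\alpha\to\infty$ (where gradients become nearly singular). As a fallback, one may instead exploit the real-analytic dependence of the system on $\alpha$: by the implicit function theorem, the set $\mathcal{C}=\{(\mathbf{q},\alpha):\mathbf{q}\text{ is a non-deg.\ CC of }U_\alpha\}$ is a smooth $1$-manifold in $\mathcal{M}\times[0,\infty)$, and one then shows that each of its connected components is either a closed loop or limits to a degenerate configuration, with the total number of components bounded by topological data of $\mathcal{M}$. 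This degree/continuation argument also delivers an $n$-dependent upper bound and avoids direct invocation of Pfaffian theory.
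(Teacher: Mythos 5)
Your overall strategy coincides with the paper's: Khovanskii's fewnomial bound for the upper estimate and the Betti numbers of the reduced configuration space (via Fadell--Neuwirth) for the lower one. However, there is a concrete gap in your upper-bound argument. You propose to apply Khovanskii's theorem to the Cartesian system \eqref{eq:cc_homogeneous} after adjoining the variables $r_{ij}=\|q_{ij}\|$. That system is invariant under the $SO(2)$-action on the positions, so its solutions come in one-parameter orbits and are never isolated; the count of non-degenerate solutions of that system is therefore zero and the fewnomial bound says nothing about the number of equivalence classes. You must quotient out the symmetry \emph{before} invoking Khovanskii, and this is exactly what the paper does by passing to the Albouy--Chenciner equations \eqref{eq:CC-AC-2}, which are $\tfrac{n(n-1)}{2}$ equations in the mutual distances $r_{ij}$ alone. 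Setting $z_{ij}=-\ln r_{ij}$ and introducing $Y_{ij}=\exp((\alpha+2)z_{ij})$, $\tilde Y_{ij}=\exp(-z_{ij})$ then yields a genuinely polynomial system of $n(n-1)$ equations of degree at most $3$ in $n(n-1)$ unknowns with $k=n(n-1)$ exponentials, to which the lemma applies verbatim. Your proposal names the right tools but omits the reduction step that makes the non-degenerate-solution count meaningful; you also need to address how non-degeneracy as a critical point on the quotient translates into non-degeneracy of the reduced system, a point the paper itself leaves implicit.

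Two smaller remarks. First, your worry about uniformity in $\alpha$ is unfounded once the system is put in Khovanskii's normal form: the parameter $\alpha$ enters only through the frequency vectors $\mathbf{a}_j$ in $y_j=\exp(\mathbf{a}_j\cdot\mathbf{x})$, and Khovanskii's bound $\prod n_i\,(\sum n_i+1)^k 2^{(k^2-k)/2}$ does not involve the $\mathbf{a}_j$ at all, so uniformity over $\alpha\in[0,\infty)$ is automatic; moreover $\alpha=0$ is not a transitional case for the central-configuration \emph{equations}, since only the force terms $r_{ij}^{-(\alpha+2)}$ appear there, not the logarithmic potential itself. The fallback continuation argument is too sketchy to substitute for this and is not needed. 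Second, your properness verification for the lower bound tacitly assumes all $m_i>0$, whereas the theorem allows $m_i\in\mathbb{R}_{*}$; this caveat applies equally to the paper's own Morse-theoretic lower bound, but if you are going to justify the Morse inequalities via properness you should flag that the argument as written does not cover mixed-sign masses.
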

Note that we do not require $m_i$ be positive. Indeed, a negative $m_i$ can be interpreted as either a negative electron in Coulomb force, or a negative (clockwise) circulation of vortex in hydrodynamics.

We need some new coordinates of the system, which can be used to describe configurations in the quotient space, after the reduction by symmetry. More precisely, we work with the Albouy-Chenciner formulation of \eqref{eq:cc} which is introduced in \cite{albouy1997probleme}. See also \cite{dziobek1900ueber,bolsinov1999lie} for other coordinates constructions.
Let $n \geq 3$ be a given integer and $m_j\neq 0$, $1 \leq j \leq n$ be the prescribed masses such that $\sum_{i=1}^{n}m_i\neq 0$. Let $r_{ij}=\| q_{ij}\|$ and the central configuration equations become  
\begin{equation} \label{eqn:CC-AC-1}
    \sum_{i=1}^n m_i S_{ji} \begin{pmatrix}
    x_{ji}\\
    y_{ji}
    \end{pmatrix} = 0,
\end{equation}
where 
\begin{align}
S_{ji} :=
\begin{cases}
   \displaystyle \frac{1}{r_{ji}^{2+\alpha}} + \bigg(\sum_{k=1}^{n} m_k\bigg)^{-1}, \quad &j\neq i\\
   0,\quad &j= i. 
\end{cases}   
\end{align}
Since the invariance under translation and rotation is indifferent to the homogeneous degree $\alpha$, following the same lines in \cite{albouy1997probleme}, then central configuration equations can be written in a concise matrix form, i.e.,  $BA + AB^{t}=0$, where $A,B\in\mathbb{R}^{n\times n}$ are defined as
\begin{align}
A_{ij}&: = 
\begin{cases}
 m_i S_{ij}, \quad &i\neq j\\
 -\sum_{k\neq j}{A_{kj}}, \quad &j=i.
\end{cases}\\
B_{ij}&:= -\frac{1}{2}\|(x_i,y_i) -(x_j,y_j)\|^2, \quad \quad \hfill \forall 1\leq i<j\leq n.
\end{align}
These equations form a system of $\frac{n^2-n}{2}$ equations, i.e., $\forall 1\leq i<j\leq n$,
% \begin{align} \label{eq:CC-AC-2}
%    0=&\sum_{k=1}^{n} m_k \left( \left(\frac{1}{r_{ik}^{\alpha+2}} +\left(\sum_{k=1}^{n} m_k\right)^{-1}\right) \left(r_{jk}^2-r_{ik}^2-r_{ij}^2\right)  \notag\\
%    &+\left(\frac{1}{r_{jk}^{\alpha+2}} +\left(\sum_{k=1}^{n} m_k\right)^{-1}\right)\left(r_{ik}^2-r_{jk}^2 - r_{ij}^2\right) \right).
% \end{align}
\begin{align} 
\label{eq:CC-AC-2}
0 = & \sum_{k=1}^{n} m_k \bigg( 
\bigg(\frac{1}{r_{ik}^{\alpha+2}} + \bigg(\sum_{k=1}^{n} m_k\bigg)^{-1}\bigg)
\bigg(r_{jk}^2 - r_{ik}^2 - r_{ij}^2\bigg) \notag \\
& \quad + 
\bigg(\frac{1}{r_{jk}^{\alpha+2}} + \bigg(\sum_{k=1}^{n} m_k\bigg)^{-1}\bigg)
\bigg(r_{ik}^2 - r_{jk}^2 - r_{ij}^2\bigg)
\bigg).
\end{align}

Obviously, it is no longer a polynomial system, and the locus is not an algebraic set if $\alpha$ is an irrational number. After some manipulation though, these equations consist solely of elementary functions and fit into the frame of fewnomial theory \cite{khovanskii1980class}. In particular, one can apply the following Khovanskii's theorem:
\begin{lem}[Khovanskii\cite{khovanskii1980class}]
Consider a system of $m$ equations $P_1=\ldots P_m=0$, with $n$ real unknowns $\mathbf{x}=x_1,\ldots,x_n$ in which the $P_i$ are polynomials of degree $n_i$ in $n+k$ variables $\mathbf{x},y_1,\ldots,y_k$ where $y_j=\exp{(\mathbf{a}_j\cdot \mathbf{x})}$ for some $\mathbf{a}_j\in \mathbb{R}^n$ $1\leq j\leq k$. The number of non-degenerate solutions of the system is bounded from above by 
\[\displaystyle \prod_{i=1}^m n_i\left(\sum_{i=1}^{m} n_i+1\right)^k2^{(k^2-k)/2}.\]
\end{lem}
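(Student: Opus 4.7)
The plan is to prove Khovanskii's bound by induction on $k$, the number of exponentials appearing in the system, via the classical Pfaffian elimination strategy. At each step of the induction I would reduce to a system with one fewer exponential and then invoke a higher-dimensional Rolle argument to control how much this reduction can multiply the number of non-degenerate solutions.

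\textbf{Base case ($k=0$).} The system is purely polynomial: $m$ polynomials of degrees $n_1,\ldots,n_m$ in $n$ real unknowns. The number of non-degenerate isolated complex solutions is at most $\prod_{i=1}^m n_i$ by Bezout, and the remaining factors $(\sum n_i+1)^0 \cdot 2^0$ equal $1$, so the claimed bound holds.

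\textbf{Inductive step.} Assume the bound at level $k-1$ is $N_{k-1}$. I would treat $y_k = \exp(\mathbf{a}_k \cdot \mathbf{x})$ as a free positive parameter $t$, so that for each fixed $t$ the modified system $\widetilde{P}_i(\mathbf{x}, y_1, \ldots, y_{k-1}, t) = 0$ has the same polynomial degrees but only $k-1$ exponentials, hence at most $N_{k-1}$ non-degenerate $\mathbf{x}$-solutions. As $t$ varies over $\mathbb{R}_{>0}$, the non-degenerate solutions sweep out finitely many smooth real-analytic branches in $(\mathbf{x},t)$-space, and the original solutions correspond exactly to the intersections of these branches with the hypersurface $\{t = \exp(\mathbf{a}_k \cdot \mathbf{x})\}$. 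A Rolle argument on each branch, applied to $g(\mathbf{x},t) = \log t - \mathbf{a}_k \cdot \mathbf{x}$, bounds the intersections per branch by $\sum n_i + 1$: between any two consecutive intersections the derivative of $g$ along the branch must vanish, and that derivative is the restriction of a polynomial of degree at most $\sum n_i + 1$. The extra factor $2^{k-1}$ at this step accounts for branch creation and annihilation at singular values of $t$ and at the boundary $t \in \{0,\infty\}$, each of which can at most double the branch count. Telescoping the recursion $N_k \leq N_{k-1} \cdot (\sum n_i + 1) \cdot 2^{k-1}$ produces exactly $\prod n_i \cdot (\sum n_i + 1)^k \cdot 2^{k(k-1)/2}$, as claimed.

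\textbf{Main obstacle.} The technical heart of the argument is the geometric analysis of the parametric solution branches: proving that they form a smooth real-analytic one-dimensional family away from finitely many singular points, controlling their behavior at the boundary in $t$ and at infinity in $\mathbf{x}$, and rigorously justifying the Rolle-type count of intersections with $\{g=0\}$. Recovering precisely the factor $2^{k-1}$ for branch doubling and $\sum n_i + 1$ for the Rolle bound requires the full Pfaffian chain formalism together with careful topological bookkeeping of how branches are born and die as $t$ crosses critical values; this is the genuinely nontrivial content of Khovanskii's fewnomial theory, and I would expect this step to dominate the technical work.
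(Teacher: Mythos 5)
The paper offers no proof of this lemma: it is imported verbatim from Khovanskii's 1980 paper and used as a black box, so there is nothing internal to compare your argument against. Judged on its own, your outline correctly identifies the skeleton of Khovanskii's original argument --- induction on the number $k$ of exponentials, elimination of one exponential at a time, and a Khovanskii--Rolle count on the resulting one-dimensional set --- and the base case via Bezout is fine. But as written this is a plan rather than a proof, and the parts you defer to the ``main obstacle'' paragraph are the entire content of the theorem.

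Three specific gaps. First, the inductive step does not actually reduce to \emph{the same} system with $k-1$ exponentials: after the Rolle step you must bound the zeros of the contact form $dg|_\gamma$, which is a new polynomial equation (a Jacobian-type determinant built from the $\partial P_i$) adjoined to $P_1=\dots=P_m=0$ in the $n+1$ unknowns $(\mathbf{x},t)$; both the number of equations and the degrees change, so the recursion $N_k\le N_{k-1}\left(\sum n_i+1\right)2^{k-1}$ is asserted rather than derived, and verifying that the degree bookkeeping really telescopes to $\left(\sum n_i+1\right)^k$ is where most of the computation lives. Second, the factor $2^{k-1}$ does not arise from ``branch creation and annihilation at singular values of $t$''; in Khovanskii's argument the Rolle inequality on a curve reads $\#\{g=0\}\le\#\{dg|_\gamma=0\}+(\text{number of noncompact components/endpoints})$, and the correction term is bounded by a separate Bezout-type count of intersections of $\gamma$ with a large generic sphere --- a different mechanism from the one you describe. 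Third, you need a Sard-type perturbation argument to guarantee that the auxiliary variety is a smooth curve and that non-degenerate solutions of the original system persist under the perturbation; this is only implicit in your sketch. None of these is a fatal wrong turn, but until they are supplied the bound is not established.
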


\begin{proof}[Proof of theorem 1]
We introduce $\displaystyle \frac{n(n-1)}{2}$ new variables:
\[
z_{ij} =-\ln{ r_{ij}}, 1\leq i<j\leq n.
\]
Now with these new variables, the system is augmented to : 
\begin{equation}
\begin{cases}
   \displaystyle  0=\sum_{k=1}^{n} m_k[ (\exp((\alpha+2) z_{ik}) +(\sum_{k=1}^{n} m_k)^{-1}))(r_{jk}^2-r_{ik}^2-r_{ij}^2) \\
   \quad \quad  +(\exp((\alpha+2) z_{jk}) +(\sum_{k=1}^{n} m_k)^{-1}))(r_{ik}^2-r_{jk}^2 - r_{ij}^2) ] , \\
   \displaystyle  0=r_{ij} - \exp(-z_{ij}),\hfill \forall 1\leq i<j\leq n.
\end{cases}
\end{equation}
The above system consists of $n(n-1)$ unknowns, i.e., $r_{ij}, z_{ij}$, with $ 1\leq i<j\leq n$, as well as $n(n-1)$ equations. Finally, after introducing the auxiliary variables 
\begin{align}
    Y_{ij}&=exp((\alpha+2) z_{ij}),\\
    \tilde{Y}_{ij}&= exp{(-z_{ij})},
\end{align}
the system then becomes
\begin{align}
\begin{cases}
    \displaystyle 0=\sum_{k=1}^{n} m_k[ (Y_{ik} +(\sum_{k=1}^{n} m_k)^{-1})(r_{jk}^2-r_{ik}^2-r_{ij}^2) + \\
    \quad \quad(Y_{jk}  +(\sum_{k=1}^{n} m_k)^{-1})(r_{ik}^2-r_{jk}^2 - r_{ij}^2) ] , \\
     0=r_{ij} - \tilde{Y}_{ij}, \hfill \forall 1\leq i<j\leq n.
\end{cases}
\end{align}
This is a system of $n(n-1)$ polynomials, half of the equations are of degree $3$ and the remaining equations are of degree $1$. By taking $\displaystyle m =k=n(n-1)$ in Khovanskii's theorem, we achieved an upper bound 
\[u(n)=3^{\frac{n(n-1)}{2}}(2n^2-2n+1)^{n(n-1)} 2^{\frac{(n^2-n)(n^2-n-1)}{2}}.\]
For the lower bound, note that central configurations with homogeneous potentials are still the critical points of the potential function $-U_{\alpha}$ on the shape sphere $\mathbb{CP}_{n-2}$. According to the theorem of 
Fadell-Neuwirth \cite{fadell1962configuration} the map $\mathbb{CP}_{n}(\mathbb{R}^2) \rightarrow \mathbb{CP}_{n-1}(\mathbb{R}^2)$ by forgetting the $n^{th}$ point $z_n$ is a fibration with fibers being $\mathbb{R}^2\setminus \{z_1,z_2,...,z_n\}$. As a result, by induction we have the Poincar\'e polynomial 
\begin{align}
    P(t) = (1+t)(1+2t)...(1+(n-1)t).
\end{align}
Taking $t=1$ gives the lower bound \[l(n)=\frac{n!}{2}.\]
\end{proof}

\begin{rem}
In \cite{Hampton2019}, Hampton conjectured that the number of equal-mass central configurations never decreases as the exponent $\alpha$ increases. Our result suggests that if this is the case, then for any fixed $n$, the number of central configurations of the equal-mass $n$-body problem will eventually become a constant, unless degenerate central configurations show up.  
\end{rem}
\begin{rem}
The result above should rather be interpreted as a qualitative consequence, instead of a quantitative one.  Although we have shown the uniform boundedness of the number of central configurations with respect to the degree of homogeneity, neither of these bounds here is optimal one. Concerning the upper bound, for $n=3$, Albouy and Fu \cite{albouy2014some} has sharpened the upper bound to be $3$. By similar method \cite{gabrielov2007mystery}, the number of equilibrium points of a potential with $4$ prefixed point charges in $\mathbb{R}^n$ is bounded by $12$. The lower bounds can be improved via a Morse-theoretic approach, see for instance \cite{Hampton2019,roberts2018morse}. In \cite{hampton2006finiteness}, the number of central configurations of the Newtonian 4-body problem is bounded from above by 8472, which is considerably less than the bound indicated in our theorem.
\end{rem}

\begin{rem}
Note that the upper bound in Theorem \ref{T:main} concerns non-degenerate central configurations only, yet Palmore \cite{Pal1975} has shown that the degenerate central configurations indeed exist for the Newtonian potential. Xia has then proved in \cite{Xia1992} that the set of masses $(m_1,\ldots,m_n)\in \mathbb{R}_+^n$ that permits degenerate central configurations has positive $n-1$ dimensional Hausdorff measure, based on the implicit function theorem. Both statements can be generalised to the homogeneous potentials.     
\end{rem}

% ------------------------------------------------------------------------

\subsection*{Acknowledgment}
This work is supported by the DFG grant ZH 605/1-1 and the NSFC Young Scientists Fund 11901160. We appreciate Lei Zhao for organising the seminar of celestial mechanics at Augsburg University in 2021.

% ------------------------------------------------------------------------
\end{document}